\newcommand{\ee}{{\mathbb E}}
\newcommand{\nn}{{\mathbb N}}
\newcommand{\pp}{{\mathbb P}}
\newcommand{\rr}{{\mathbb R}}
\newcommand{\beq}{\begin{eqnarray*}}
\newcommand{\feq}{\end{eqnarray*}}
\newcommand{\beqn}{\begin{eqnarray}}
\newcommand{\feqn}{\end{eqnarray}}
\newtheorem{theorem}{Theorem}
\makeatletter \@addtoreset{theorem}{section}\makeatother
\newtheorem{lemma}[theorem]{Lemma}
\newtheorem{definition}[theorem]{Definition}
\newtheorem*{theorem*}{Theorem}
\newtheorem{proposition}[theorem]{Proposition}
\newtheorem{remark}[theorem]{Remark}
\begin{document}

\title{Precise deviations for Cox processes with a shot noise intensity}
\author{Zailei Cheng\thanks{Department of Mathematics, Florida State University, Tallahassee, FL 32306, United State of America;e-mail: zcheng@math.fsu.edu}
\and Youngsoo Seol\thanks{Department of Mathematics, Dong-A
University, Busan, Saha-gu, Nakdong-daero 550, 37, Republic of
Korea;e-mail: prosul76@dau.ac.kr}}
\maketitle

\begin{abstract}
We consider a Cox process with Poisson shot noise intensity which
has been widely applied in insurance, finance, queue theory,
statistic, and many other fields. Cox process is flexible because
its intensity not only depends on the time but also can be
considered as a stochastic process and so it can be considered as a
two step randomization procedure. Due to the structure of such
models, a number of useful and general results can easily be
established. In this paper, we study the fluctuations and precise
deviations for shot noise Cox process using the recent mod-$\phi$
convergence method.
\end{abstract}
{\em MSC2010: } primary 60G55; secondary 60F05, 60F10.\\
\noindent{\it Keywords}: Cox processes, Poisson shot noise, Mod$-\phi$ convergence, Precise deviations.
\maketitle

\section{Introduction}
A Cox process is first introduced by \cite{Cox} and a natural
generalization of a Poisson process by considering the intensity of
Poisson process as a realization of a random measure~\citep{Moller}.
The Cox process provides the flexibility of letting the intensity
not only depend on time but also allowing it to be a stochastic
process. Hence, it can be viewed as a two-step randomization
procedure which can deal with the stochastic nature of catastrophic
loss occurrences in the real world. Moreover, shot noise processes
~\citep{Cox2} are particularly useful to model claim arrivals; they
provide measures for frequency, magnitude and the time period needed
to determine the effect of catastrophic events within the same
framework; as time passes, the shot noise process decreases as more
and more losses are settled, and this decrease continues until
another event occurs which will result in a positive jump.
Therefore, the shot noise process can be used as the intensity of a
Cox process to measure the number of catastrophic losses. The shot
noise Cox processes were introduced in~\cite{Moller} and further
generalized in~\cite{Hellmund} without discussing the statistical
inference for the model. Note that the class of shot noise Cox
processes also includes the very popular Poisson Neyman-Scott
processes like the Thomas process(\citep{Thomas},[\citep{Illian},
Section 6.3.2]).

The Cox model has been used widely in many aspects: insurance,
finance, queue theory, statistic
etc.(cf.\citep{Albrecher},\citep{Cox},\citep{Dassios2}). \cite{Dassios2} applied the Cox process with Poisson shot noise
intensity to pricing stop-loss catastrophe reinsurance contract and
catastrophe insurance derivatives. \cite{Albrecher} studied the asymptotic estimates for
infinite time and finite time ruin probabilities of the risk model.

There are many applications of large deviations such as insurance,
portfolio management, risk management, queue system,
statistics(\citep{Macci2}, \citep{Gao},\citep{Shen},\citep{Ganesh}).
\cite{Macci2} studied the large deviation principles for the Markov
modulated risk process with reinsurance. \cite{Gao} extended the
result considered in~\cite{Macci2} to the sample path large and
moderate deviation principles. \cite{Shen} obtained the precise
large deviation results for the customer arrival based risk model
which can be treated as a generalized Poisson shot noise process.
Recently, \cite{Macci} took into account the large deviation
estimations for the ruin probability of the risk processes with shot
noise Cox claim number process and reserve dependent premium rate.
Many researchers have made great efforts of the precise large
deviations for the loss process of a classical insurance risk model
and have obtained a lot of inspiring results. Precise large
deviations for the loss process have been widely investigated. For
the classic results, we refer the reader to \cite{cline},
\cite{kluppelberg}, \cite{mikosch}, and \cite{Ng}, among others. It
is worth mentioning that the results of precise large deviations for
random sums are particulary useful for evaluation of some risk
measures such as conditional tail expectation and value at risk of
aggregate claims of a large insurance portfolio; see \cite{mcneil}
for a review of risk measures. \cite{wang} investigated precise
large deviations of multi-risk models.

Previous works on insurance applications using a shot noise process
or a Cox process with shot noise intensity can be found in
\cite{bremaud}, \cite{Dassios2}, \cite{Jang}, \cite{Torrisi},
\cite{Albrecher}, \cite{Macci}, \cite{Zhu5} and \cite{Schmidt}.
However, previous study on the large deviations and moderate
deviations for Cox process only gives the leading order term
\citep{Gao2}, but not the higher order expansion which make more
accurate computational tractability for an financial application.
Distributional properties for such a model generate computational
tractability for an financial application. Transform, distribution,
and moment formulae computational tractability for a range of
applications in portfolio credit risk. The transform formulae
facilitate the valuation, hedging, and calibration of a portfolio
credit derivative, which is a security whose payoff is a specified
function of the portfolio loss, and which provides insurance against
default losses in the portfolio.

 In this paper, we assume that $\{N_{t},t\geq0\}$ is a
Cox process with Poisson shot noise intensity, that is, the
intensity $\lambda_{t}$ of $N_{t}$ is stochastic and at time t,
given by \beqn\label{intencox}
\lambda_{t}=\nu+\int_{0}^{t}g(t-s)d\bar{N}_{s}, \feqn where
$\bar{N}_{t}$ is a Poisson process with intensity $\rho,$ $\nu>0$
and $g:\rr_{+}\rightarrow\rr_{+}$ being locally bounded. We study
precise deviations for shot noise Cox process using the recent
mod-$\phi$ convergence method developed in~\cite{Feray}. In many
applications in finance, insurance, and other fields, more precise
deviations are desired, which motivates us to study the precise
deviations for Cox process with a shot noise due to higher order
expansion given by the precise deviation for Cox process. Taylor
expansions have numerous applications to simplify complex functions
in finance such as the price of a portfolio of options or bonds. The
more terms used in the expansion the more accurate the
approximation. Higher order Taylor expansions are obtained using
higher partial derivatives. Higher derivatives are derivatives of
derivatives. Thus higher order expansion is one of the most
important concepts in mathematical finance literature. In
particular, using the re-normalization theory called mod-$\phi$
convergence method, we construct an extremely flexible framework to
study the precise deviations and limit theorems. This type of
convergence is a relatively new concept with many deep
ramifications.  It is worth mentioning that the results of precise
large deviations for Cox process with a shot noise are particulary
useful for calculations and evaluations in financial applications
using the higher order expansion given by the precise deviations
which is more applicable than the large or moderate deviations
principle. Very recently, precise deviations for Hawkes processes
was studied in \cite{Gao3} using the mod-$\phi$ convergence method.
In particular, \cite{Gao3} used Hawkes processes for large time
asymptotics, that strictly extends and improves the existing results
in the literature. In order to apply the results of mod-$\phi$
convergence theory, we manipulate the the moment generating function
for shot noise Cox process, and then the precise deviations
principle and precise moderate deviation and fluctuation results are
obtained by the mod-$\phi$ convergence method after careful analysis
and series of lemmas.
\par
The structure of this paper is organized as follows. Some auxiliary
results and the main results are stated in Section 2. The proofs for
the main theorems are contained in Section 3.
\par
\section{Statement of the main results}
This section states the main results of this paper. It consists of
two key lemmas and the precise deviations principle and furthermore
precise deviation principle and fluctuation results. The main
strategy of proving the precise deviations principle is by showing
the mod-$\phi$ convergence as defined in~\cite{Feray} and apply
their Theorem 3.4 and 3.9 to get the main Theorem. We start with the
definition of mod-$\phi$ convergence and then the assumptions which
we will use throughout the paper.

Let us first recall the definition of mod-$\phi$ convergence, see
e.g. Definition 1.1.~\citep{Feray}. Let $(X_{n})_{n\in\nn}$ be a
sequence of real-valued random variables and $\ee[e^{zX_{n}}]$ exist
in a strip
$\mathcal{S}_{(c,d)}:=\{z\in\mathbb{C}:c<\mathcal{R}(z)<d\},$ with
$c<d$ extended real numbers, i.e. we allow $c=-\infty$ and
$d=+\infty$ and $\mathcal{R}(z)$ denotes the real part of $z\in
\mathbb{C}$ throughout this paper. We assume that there exists a
non-constant infinitely divisible distribution $\phi$ with
$\int_{\rr}e^{zx}\phi(dx)=e^{\eta(z)},$ which is well defined on
$\mathcal{S}_{(c,d)},$ and an analytic function $\psi(z)$ that does
not vanish on the real part of $\mathcal{S}_{(c,d)}$ such that
locally uniformly in $z\in\mathcal{S}_{(c,d)},$ \beqn\label{convergence}
e^{-t_{n}\eta(z)}\ee[e^{z X_{n}}]\rightarrow\psi(z), \feqn where
$t_{n}\rightarrow+\infty$ as $n\rightarrow\infty.$ Then we say that
$X_{n}$ converges mod-$\phi$ on $\mathcal{S}_{(c,d)}$ with
parameters $(t_{n})_{n\in\nn}$ and limiting function $\psi.$

Assume that $\phi$ is a lattice distribution and the convergence is at speed $ \mathcal{O}\left(\frac{1}{t^{\nu}_{n}}\right) $. Then Theorem
3.4.~\cite{Feray} says that for any $x\in\rr$ in the interval
$(\eta'(c),\eta'(d))$ and $\theta^{*}$ defined as
$\eta'(\theta^{*})=x,$ assume that $t_{n}x\in\nn,$ then, \beqn
\pp(X_{n}=t_{n}x)=\frac{e^{-t_{n}F(x)}}{\sqrt{2\pi
t_{n}\eta''(\theta^{*})}}\Biggl(\psi(\theta^{*})+\frac{a_{1}}{t_{n}}+\frac{a_{2}}{t^{2}_{n}}+\cdots
+\frac{a_{\nu-1}}{t^{\nu-1}_{n}}+\mathcal{O}\left(\frac{1}{t^{\nu}_{n}}\right)\Biggr),
\feqn as $n\rightarrow\infty,$ where
$F(x):=\sup_{\theta\in\rr}\{\theta x-\eta(\theta)\}$ is the Legendre
transform of $\eta(\cdot),$ and similarly, if $x\in\rr$ is in the
range of $(\eta'(0),\eta'(d)),$ then, \beqn \pp(X_{n}\geq
t_{n}x)=\frac{e^{-t_{n}F(x)}}{\sqrt{2\pi
t_{n}\eta''(\theta^{*})}}\frac{1}{1-e^{-\theta^{*}}}\Biggl(\psi(\theta^{*})+\frac{b_{1}}{t_{n}}+\frac{b_{2}}{t_{n}}+\cdots
+\frac{b_{\nu-1}}{t^{\nu-1}_{n}}+\mathcal{O}\left(\frac{1}{t^{\nu}_{n}}\right)\Biggr),
\feqn as $n\rightarrow\infty,$ where $(a_{k})_{k=1}^{\infty},$
$(b_{k})_{k=1}^{\infty}$ are rational fractions in the derivatives
of $\eta$ and $\psi$ at $\theta^{*},$ that can be computed as
described in Remark 3.7.~\citep{Feray}.

Then let's recall Theorem 3.9.~\citep{Feray}, which states the
precise moderate deviation.

Consider a sequence $ (X_n)_{n\in\mathbb{N}} $ that converges mod-$ \phi $, with a reference infinitely divisible law $ \phi $ that is a lattice distribution. Assume $ y=o((t_n)^{1/6}) $. Then,

\begin{equation}
\mathbb{P}\left(X_n\geq t_n\eta'(0)+\sqrt{t_n\eta''(0)}y\right)=\mathbb{P}\left(\mathcal{N}_\mathbb{R}(0,1)\geq y\right)(1+o(1)).
\end{equation}

On the other hand, assuming $ y\gg 1 $ and $y=o((t_n)^{1/2}) $, if $ x=\eta'(0)+\sqrt{\eta''(0)/t_n}y $ and $ h $ is the solution of $ \eta'(h)=x $, then

\begin{equation}
\mathbb{P}\left(X_n\geq t_n\eta'(0)+\sqrt{t_n\eta''(0)}y\right)=\frac{e^{-t_nF(x)}}{h\sqrt{2\pi t_n \eta''(h)}}(1+o(1))
\end{equation}

And Corollary 3.13~\citep{Feray} gives a more explicit form of Theorem 3.9.~\citep{Feray}. If $y=o((t_n)^{1/4})$, then one has

\begin{equation}
\mathbb{P}\left(X_n\geq t_n\eta'(0)+\sqrt{t_n\eta''(0)}y\right)=\frac{(1+o(1))}{y\sqrt{2\pi}}e^{-\frac{y^2}{2}}\exp\left({\frac{\eta'''(0)}{6(\eta''(0))^{3/2}}\frac{y^3}{\sqrt{t_n}}}\right)
\end{equation}

More generally, if $ y=o((t_n)^{1/2-1/m}) $, then one has

\begin{equation}
\mathbb{P}\left(X_n\geq t_n\eta'(0)+\sqrt{t_n\eta''(0)}y\right)=\frac{(1+o(1))}{y\sqrt{2\pi}}\exp\left(-\sum_{i=2}^{m-1}\frac{F^{(i)}(\eta'(0))}{i!}\frac{(\eta''(0))^{i/2}y^i}{t_n^{(i-2)/2}}\right)
\end{equation}

Here we consider $N_{t}$ as a Cox process with Poisson shot noise
intensity defined in~\eqref{intencox}, we assume throughout this
paper that

\begin{itemize}
    \item $\Vert g\Vert_{L^{1}}=\int_{0}^{\infty}g(t)dt<\infty$
    \item $ g(t)=\mathcal O\left(\frac{1}{t^{\nu+2}}\right) $ as $ t\to\infty $
\end{itemize}

By the definition of mod-$ \phi $ convergence, the moment generating
function of $ N_t $ is finite, which implies the first assumption.
We also need the second assumption for the convergence speed  in
mod-$ \phi $ convergence.

Our main results for the precise large deviations and precise
moderate deviations for the Cox process with Poisson shot noise
intensity are stated as follows.

\subsection{Precise Large Deviations}

\begin{definition}\label{def1}
    We say that the sequence of random variables $ (X_n)_{n\in\mathbb{N}} $ converges mod-$\phi$ at speed $ \mathcal{O}\left(\frac{1}{t^{\nu}_{n}}\right) $ if the difference of the two sides of Equation \ref{convergence} can be bounded by $ C_K\left(\frac{1}{t^{\nu}_{n}}\right) $ for any $ z $ in a given compact subset $ K $ of $\mathcal{S}_{(c,d)}$.
\end{definition}

\begin{theorem}\label{theo1}

(i) For any $x>0,$ and $tx\in\nn,$ as $t\rightarrow\infty,$
\beqn
\pp(N_{t}=tx)=e^{-tI(x)}\sqrt{\frac{I''(x)}{2\pi t}}\Biggl(\psi(\theta^{*})+\frac{a_{1}}{t}+\frac{a_{2}}{t^{2}}+\cdots
+\frac{a_{\nu-1}}{t^{\nu-1}}+\mathcal{O}\left(\frac{1}{t^{\nu}}\right)\Biggr),
\feqn
where for any $\theta\in \mathbb{C}$
\beqn
\psi(\theta):=e^{\rho\varphi(\theta)}
\feqn
and
\beqn
\varphi(\theta)=\int_{0}^{\infty}\Bigl[e^{(e^{\theta}-1)\int_{0}^{u}g(s)ds}-e^{(e^{\theta}-1)\int_{0}^{\infty}g(s)ds}\Bigr]du,
\feqn
which is analytic in $\theta$ for any $\theta\in \mathbb{C}$ and $ I(x) $ is defined as

\begin{equation}\label{Ix}
I(x)=\sup_{\theta\in\rr}\{\theta x-\eta(\theta)\big\}
\end{equation}

and

\begin{equation}\label{theta}
\eta(\theta)=(e^{\theta}-1)\nu+\rho\Bigl(e^{(e^{\theta}-1)\Vert g\Vert_{L^{1}}}-1\Bigr)
\end{equation}
$ \theta^{*} $ is the solution of $ \eta'(\theta^{*})=x $ and
$(a_{k})_{k=1}^{\infty}$ are rational fractions in the derivatives
of $\eta$ and $\psi$ at $\theta^{*}$.

Note that $ \theta^{*} $ is unique for each $ x $ because it is easy to show that $ \eta''(\theta)>0 $.

(ii) For any $x>\nu+\rho\Vert g\Vert_{L_1},$ as $t\rightarrow\infty,$
\beqn
\pp(N_{t}\geq tx)=e^{-tI(x)}\sqrt{\frac{I''(x)}{2\pi t}}\frac{1}{1-e^{-\theta^{*}}}\Biggl(\psi(\theta^{*})+\frac{b_{1}}{t}+\frac{b_{2}}{t^2}+\cdots
+\frac{b_{\nu-1}}{t^{\nu-1}}+\mathcal{O}\left(\frac{1}{t^{\nu}}\right)\Biggr),
\feqn
where $(b_{k})_{k=1}^{\infty}$ are rational fractions in the derivatives of $\eta$ and $\psi$ at $\theta^{*}$.
\end{theorem}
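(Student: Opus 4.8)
The plan is to establish mod-$\phi$ convergence for $N_{t}$ with parameters $t_{n}=t$, reference cumulant generating function $\eta$ as in \eqref{theta}, and limiting function $\psi(\theta)=e^{\rho\varphi(\theta)}$, and then to invoke Theorem 3.4 of~\cite{Feray}. First I would compute the moment generating function of $N_{t}$. Conditioning on the intensity path and using that, given $\lambda$, the variable $N_{t}$ is Poisson with mean $\int_{0}^{t}\lambda_{s}\,ds$, one has $\ee[e^{\theta N_{t}}]=\ee[\exp((e^{\theta}-1)\int_{0}^{t}\lambda_{s}\,ds)]$. Writing $\int_{0}^{t}\lambda_{s}\,ds=\nu t+\int_{0}^{t}G(t-u)\,d\bar{N}_{u}$ with $G(r):=\int_{0}^{r}g(w)\,dw$ (by Fubini), and applying the exponential (Campbell) formula for the Poisson process $\bar{N}$ of rate $\rho$, I obtain
\[
\ee[e^{\theta N_{t}}]=\exp\Bigl((e^{\theta}-1)\nu t+\rho\int_{0}^{t}\bigl(e^{(e^{\theta}-1)G(v)}-1\bigr)\,dv\Bigr).
\]

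Next I would separate the linear-in-$t$ part from a convergent remainder. Since $G(v)\to\|g\|_{L^{1}}$ as $v\to\infty$, I split the integral by adding and subtracting $e^{(e^{\theta}-1)\|g\|_{L^{1}}}-1$; this produces exactly $t\,\eta(\theta)$ with $\eta$ as in \eqref{theta}, plus the term $\rho\int_{0}^{t}(e^{(e^{\theta}-1)G(v)}-e^{(e^{\theta}-1)\|g\|_{L^{1}}})\,dv$. Letting $t\to\infty$, the latter converges to $\rho\varphi(\theta)$, giving $e^{-t\eta(\theta)}\ee[e^{\theta N_{t}}]\to e^{\rho\varphi(\theta)}=\psi(\theta)$. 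Analyticity of $\varphi$ on all of $\mathbb{C}$ follows from Morera's theorem, since the integrand is entire in $\theta$ and the integral converges locally uniformly, and $\psi=e^{\rho\varphi}$ is then automatically nonvanishing on the real part of the strip. I would also record that $\eta$ is the cumulant generating function of a Poisson$(\nu)$ law summed with an independent compound Poisson law (rate $\rho$, Poisson$(\|g\|_{L^{1}})$ jumps), so the reference distribution $\phi$ is lattice and supported on $\nn$, as required.

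The main obstacle is verifying the convergence speed $\mathcal{O}(1/t^{\nu})$ of Definition~\ref{def1}. The gap in the exponents between $e^{-t\eta(z)}\ee[e^{zN_{t}}]$ and $\psi(z)$ equals $-\rho\int_{t}^{\infty}(e^{(e^{z}-1)G(v)}-e^{(e^{z}-1)\|g\|_{L^{1}}})\,dv$. Writing $\|g\|_{L^{1}}-G(v)=\int_{v}^{\infty}g(w)\,dw$ and using the tail assumption $g(w)=\mathcal{O}(1/w^{\nu+2})$, this tail is $\mathcal{O}(1/v^{\nu+1})$, hence the integrand is $\mathcal{O}(1/v^{\nu+1})$ and the integral is $\mathcal{O}(1/t^{\nu})$. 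The delicate point is making the bound uniform in $z$ over a compact set $K\subset\cals_{(c,d)}$: one bounds $|e^{z}-1|$ and $e^{(e^{z}-1)\|g\|_{L^{1}}}$ uniformly on $K$, and since $\psi$ is bounded there and $\exp$ is Lipschitz on bounded sets, the multiplicative gap inherits the rate $\mathcal{O}(1/t^{\nu})$ with a constant $C_{K}$ depending only on $K$. This is precisely the step in which the second hypothesis on $g$ is used.

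Finally, with mod-$\phi$ convergence at speed $\mathcal{O}(1/t^{\nu})$ in hand, I would apply Theorem 3.4 of~\cite{Feray}. Computing $\eta'(\theta)=e^{\theta}\nu+\rho\|g\|_{L^{1}}e^{\theta}e^{(e^{\theta}-1)\|g\|_{L^{1}}}$ shows the range of $\eta'$ is $(0,\infty)$ with $\eta'(0)=\nu+\rho\|g\|_{L^{1}}$, matching the hypothesis $x>0$ in (i) and $x>\nu+\rho\|g\|_{L^{1}}$ (equivalently $\theta^{*}>0$, so that $1/(1-e^{-\theta^{*}})$ is well defined) in (ii); strict convexity $\eta''>0$ yields uniqueness of $\theta^{*}$. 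The only remaining translation is the Gaussian prefactor: by Legendre duality $I=F$ and $I''(x)=1/\eta''(\theta^{*})$ at $\eta'(\theta^{*})=x$, so $1/\sqrt{2\pi t\,\eta''(\theta^{*})}=\sqrt{I''(x)/(2\pi t)}$, giving exactly the stated forms of (i) and (ii).
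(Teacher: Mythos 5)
Your proposal is correct and follows essentially the same route as the paper: compute the moment generating function of $N_{t}$ by conditioning on the intensity and applying the Campbell formula, split off the linear-in-$t$ part to identify $\eta$ and the limiting function $\psi=e^{\rho\varphi}$, check infinite divisibility of the reference lattice law as Poisson plus compound Poisson, use the tail assumption on $g$ to get the $\mathcal{O}(1/t^{\nu})$ speed, and invoke Theorem 3.4 of F\'eray--M\'eliot--Nikeghbali. The only additions are welcome fill-ins the paper leaves implicit (Morera's theorem for analyticity of $\varphi$, and the Legendre-duality identity $I''(x)=1/\eta''(\theta^{*})$ converting the prefactor).
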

\par
The key to prove main Theorem is to verify the mod-$\phi$ convergence. More precisely, we need to show the following three lemmas.

\begin{lemma}\label{theo2}
    Y has an infinitely divisible distribution. Y is some random variable defined as
    \beqn
    e^{\eta(\theta)}=\exp\Bigl[(e^{\theta}-1)\nu+\rho\Bigl(e^{(e^{\theta}-1)\Vert g\Vert_{L^{1}}}-1\Bigr)\Bigr]=\ee[e^{\theta Y}],
    \feqn
 Note that infinitely divisible is a limitation of the method of mod-$\phi$ convergence. Fortunately, the limiting distribution in the case of the Cox process with shot noise is indeed infinitely divisible.
\end{lemma}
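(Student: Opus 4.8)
The plan is to display $Y$ as an independent sum of a Poisson and a compound Poisson random variable, both of which are textbook examples of infinitely divisible laws. Recall that a distribution is infinitely divisible exactly when, for every $n\in\nn$, it is the $n$-fold convolution of some probability measure; in terms of cumulant generating functions this means that $\eta(\theta)/n$ must again be the cumulant generating function of a probability law for each $n$. I would verify this after splitting
\[
\eta(\theta)=\underbrace{(e^{\theta}-1)\nu}_{=:\,\eta_{1}(\theta)}+\underbrace{\rho\Bigl(e^{(e^{\theta}-1)\Vert g\Vert_{L^{1}}}-1\Bigr)}_{=:\,\eta_{2}(\theta)}
\]
and treating the two summands separately.

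First I would observe that $\eta_{1}(\theta)=\nu(e^{\theta}-1)$ is precisely the cumulant generating function of a Poisson random variable of mean $\nu$, and it is infinitely divisible because $\eta_{1}/n$ is the cumulant generating function of a $\mathrm{Poisson}(\nu/n)$ law. For the second summand, set $M(\theta):=e^{(e^{\theta}-1)\Vert g\Vert_{L^{1}}}$; this is exactly the moment generating function of a $\mathrm{Poisson}(\Vert g\Vert_{L^{1}})$ variable, and since $M(0)=1$ it corresponds to a genuine probability law $\mu$ on $\{0,1,2,\dots\}$. Hence
\[
\eta_{2}(\theta)=\rho\bigl(M(\theta)-1\bigr)
\]
is the cumulant generating function of a compound Poisson sum $S=\sum_{i=1}^{K}Z_{i}$, where $K\sim\mathrm{Poisson}(\rho)$ and the $Z_{i}$ are i.i.d. with law $\mu$ and independent of $K$; indeed, conditioning on $K$ gives $\ee[e^{\theta S}]=\ee[M(\theta)^{K}]=e^{\rho(M(\theta)-1)}$.

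It then remains to combine the pieces. Because $\ee[e^{\theta Y}]=e^{\eta_{1}(\theta)}e^{\eta_{2}(\theta)}$, the law of $Y$ equals that of $Y_{1}+S$ with $Y_{1}\sim\mathrm{Poisson}(\nu)$ independent of $S$. The compound Poisson part is infinitely divisible since $\eta_{2}/n=(\rho/n)(M-1)$ is once more a compound Poisson cumulant generating function, now with rate $\rho/n$ and the same jump law $\mu$; thus for each $n$ both $\eta_{1}/n$ and $\eta_{2}/n$, and therefore their sum $\eta/n$, are cumulant generating functions of probability measures, so that $Y$ is the $n$-fold convolution of a probability law for every $n\in\nn$. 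Equivalently, one may read off the L\'evy--Khintchine representation $\eta(\theta)=\int_{\{1,2,\dots\}}(e^{\theta k}-1)\,\lambda(dk)$ with the finite L\'evy measure determined by $\lambda(\{1\})=\nu+\rho\Vert g\Vert_{L^{1}}e^{-\Vert g\Vert_{L^{1}}}$ and $\lambda(\{k\})=\rho e^{-\Vert g\Vert_{L^{1}}}\Vert g\Vert_{L^{1}}^{k}/k!$ for $k\geq2$, exhibiting $Y$ directly as a $\{0,1,2,\dots\}$-valued compound Poisson variable. I do not anticipate a real obstacle here: the only step needing attention is the recognition that the inner exponential $e^{(e^{\theta}-1)\Vert g\Vert_{L^{1}}}$ is itself a Poisson moment generating function, so that the jump law $\mu$ entering the compound Poisson representation is a bona fide probability measure and not merely an analytic function.
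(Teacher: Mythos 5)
Your proposal is correct and uses essentially the same decomposition as the paper: $Y=X+Z$ with $X\sim\mathrm{Poisson}(\nu)$ independent of a compound Poisson $Z=\sum_{i=1}^{R}Z_i$, $R\sim\mathrm{Poisson}(\rho)$, $Z_i\sim\mathrm{Poisson}(\Vert g\Vert_{L^{1}})$. The only difference is that you explicitly justify why this sum is infinitely divisible (via $\eta/n$ and the L\'evy--Khintchine form), a step the paper simply asserts; your extra detail is correct and harmless.
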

\par
\begin{lemma}\label{theo4}
    For any $ \theta \in \mathbb{C}$,
    \beqn
    \varphi(\theta)=\int_{0}^{\infty}\Bigl[{e^{(e^{\theta}-1)\int_{0}^{u}g(s)ds}}-e^{(e^{\theta}-1)\int_{0}^{\infty}g(s)ds}\Bigr]du,
    \feqn
    is well-defined and analytic in $\theta,$ and
    \beqn
    e^{-t\eta(\theta)}\ee[e^{\theta N_{t}}]\rightarrow\psi(\theta):=e^{\rho\varphi(\theta)},
    \feqn
    as $t\rightarrow\infty,$ locally uniformly in $\theta$.
\end{lemma}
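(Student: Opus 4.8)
The plan is to compute the moment generating function of $N_t$ explicitly, and then read off both the limiting function $\psi$ and the speed of convergence from a tail estimate on $g$. Since $N_t$ is a Cox process directed by $\lambda_t$, conditioning on the whole intensity path makes $N_t$ a Poisson variable with random mean $\Lambda_t=\int_0^t\lambda_s\,ds$, so that $\ee[e^{\theta N_t}]=\ee[\exp((e^{\theta}-1)\Lambda_t)]$. First I would rewrite $\Lambda_t$ in terms of the driving Poisson process $\bar N$: using \eqref{intencox} and Fubini,
$$\Lambda_t=\nu t+\int_0^t\!\!\int_0^s g(s-r)\,d\bar N_r\,ds=\nu t+\int_0^t G(t-r)\,d\bar N_r,\qquad G(v):=\int_0^v g(s)\,ds .$$

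Next I would apply the exponential (Campbell) formula for the Poisson process $\bar N$ of rate $\rho$, namely $\ee[\exp(\int_0^t f(r)\,d\bar N_r)]=\exp(\rho\int_0^t(e^{f(r)}-1)\,dr)$, with $f(r)=(e^{\theta}-1)G(t-r)$. After the substitution $u=t-r$ this yields
$$\ee[e^{\theta N_t}]=\exp\Bigl((e^{\theta}-1)\nu t+\rho\int_0^t\bigl(e^{(e^{\theta}-1)G(u)}-1\bigr)\,du\Bigr).$$
Comparing with \eqref{theta} and writing $\alpha:=e^{\theta}-1$ and $G(\infty)=\Vert g\Vert_{L^{1}}$, the normalization telescopes cleanly:
$$e^{-t\eta(\theta)}\ee[e^{\theta N_t}]=\exp\Bigl(\rho\int_0^t\bigl(e^{\alpha G(u)}-e^{\alpha G(\infty)}\bigr)\,du\Bigr),$$
so that the claimed limit $\psi(\theta)=e^{\rho\varphi(\theta)}$ is exactly the assertion that $\int_0^t(\cdots)\,du\to\varphi(\theta)$ as $t\to\infty$.

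It then remains to justify convergence and regularity of $\varphi$. Writing $e^{\alpha G(u)}-e^{\alpha G(\infty)}=e^{\alpha G(\infty)}\bigl(e^{-\alpha(G(\infty)-G(u))}-1\bigr)$ and using $G(\infty)-G(u)=\int_u^\infty g(s)\,ds=\mathcal O(u^{-(\nu+1)})$ (from the standing assumption $g(t)=\mathcal O(t^{-(\nu+2)})$), the integrand is $\mathcal O(u^{-(\nu+1)})$ for large $u$, uniformly for $\theta$ in any compact set. Since $\nu+1>1$, the integral $\varphi(\theta)$ converges absolutely, and moreover the tail $\varphi(\theta)-\int_0^t(\cdots)\,du=\int_t^\infty(\cdots)\,du=\mathcal O(t^{-\nu})$ uniformly on compacts, which delivers the locally uniform convergence and, incidentally, the $\mathcal O(1/t^{\nu})$ speed required by Definition \ref{def1}. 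For analyticity I would note that for each fixed $u$ the integrand is entire in $\theta$ (being built from $e^{\theta}$ and exponentials thereof), so the uniform tail bound lets me invoke Morera's theorem together with Fubini on small closed contours; the locally uniform limit $\varphi$ is therefore analytic on all of $\mathbb{C}$.

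The main obstacle is the first block: carefully justifying the interchange of integration in the computation of $\Lambda_t$ and the application of the exponential formula to the stochastic integral $\int_0^t G(t-r)\,d\bar N_r$ against $\bar N$, together with checking that the resulting generating function is finite for complex $\theta$ so that all manipulations are legitimate. Once the explicit exponential form is secured, the convergence, the locally uniform limit, and the analyticity of $\varphi$ follow from the polynomial tail bound on $g$ by routine dominated-convergence and Morera arguments.
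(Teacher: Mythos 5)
Your proposal is correct and follows essentially the same route as the paper: compute $\ee[e^{\theta N_t}]$ explicitly via conditioning and the exponential formula for the Poisson process $\bar N$, observe that the normalization by $e^{t\eta(\theta)}$ leaves exactly $\exp\bigl(\rho\int_0^t(e^{\alpha G(u)}-e^{\alpha G(\infty)})\,du\bigr)$, and then control the tail $\int_t^\infty$ using $\int_u^\infty g(s)\,ds=\mathcal O(u^{-(\nu+1)})$ to get locally uniform convergence at speed $\mathcal O(t^{-\nu})$. The only cosmetic difference is that the paper bounds the integrand via the mean value theorem while you factor out $e^{\alpha G(\infty)}$ and use the elementary estimate on $e^{z}-1$ (which is in fact cleaner for complex $\theta$), and you spell out the Morera argument for analyticity that the paper merely asserts.
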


\begin{lemma}\label{theo10}
    \{$ N_t,t\geq0 $\} converges mod-$\phi$ at speed $ \mathcal{O}\left(\frac{1}{t^{\nu}}\right) $ as $ t\to\infty $.
\end{lemma}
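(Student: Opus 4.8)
The plan is to reuse the exact moment generating function representation already obtained in the proof of Lemma~\ref{theo4} and to upgrade its qualitative convergence into a quantitative rate. Writing $G(u)=\int_0^u g(s)\,ds$ and conditioning on the intensity path, $N_t$ is conditionally Poisson with mean $\int_0^t\lambda_s\,ds=\nu t+\int_0^t G(t-r)\,d\bar N_r$ (after a Fubini interchange in $\int_0^t\int_0^s g(s-r)\,d\bar N_r\,ds$), so the Poisson exponential formula yields the closed form
\[
e^{-t\eta(\theta)}\ee[e^{\theta N_t}]=\exp\left(\rho\int_0^t\Bigl[e^{(e^\theta-1)G(u)}-e^{(e^\theta-1)G(\infty)}\Bigr]du\right).
\]
Since $\rho\varphi(\theta)$ is precisely this integral taken over $[0,\infty)$, the left-hand side equals $\psi(\theta)\,e^{R_t(\theta)}$ with
\[
R_t(\theta)=-\rho\int_t^\infty\Bigl[e^{(e^\theta-1)G(u)}-e^{(e^\theta-1)G(\infty)}\Bigr]du .
\]
Thus the whole problem reduces to showing $|R_t(\theta)|\leq C_K/t^\nu$ uniformly for $\theta$ in any compact $K\subset\cals_{(c,d)}$, and then transferring that bound from the log scale to the scale demanded by Definition~\ref{def1}.

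For the tail estimate I would use the integral identity $e^a-e^b=(a-b)\int_0^1 e^{b+s(a-b)}\,ds$, valid for complex $a,b$ — this complex version is needed because the strip $\cals_{(c,d)}$ is complex — taking $a=(e^\theta-1)G(u)$ and $b=(e^\theta-1)G(\infty)$. Because $G$ takes values in the bounded interval $[0,\|g\|_{L^1}]$, the segment joining $b$ to $a$ stays in a fixed bounded region of $\mathbb{C}$ as $\theta$ ranges over $K$, so the integral factor is bounded by some $C_K$. Hence
\[
\bigl|e^{(e^\theta-1)G(u)}-e^{(e^\theta-1)G(\infty)}\bigr|\leq C_K\,|e^\theta-1|\,\bigl(G(\infty)-G(u)\bigr)=C_K\,|e^\theta-1|\int_u^\infty g(s)\,ds .
\]

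The decay assumption $g(s)=\mathcal{O}(s^{-(\nu+2)})$ gives $\int_u^\infty g(s)\,ds=\mathcal{O}(u^{-(\nu+1)})$, and integrating this bound over $[t,\infty)$ produces exactly the $\mathcal{O}(t^{-\nu})$ rate, namely $|R_t(\theta)|\leq C_K'\int_t^\infty u^{-(\nu+1)}\,du=\mathcal{O}(t^{-\nu})$, uniformly on $K$. Finally, to pass to the bound required by Definition~\ref{def1}, I would write $e^{-t\eta(\theta)}\ee[e^{\theta N_t}]-\psi(\theta)=\psi(\theta)\bigl(e^{R_t(\theta)}-1\bigr)$ and use that $\psi$ is analytic, hence bounded on $K$, together with the elementary inequality $|e^{R_t}-1|\leq 2|R_t|$ for $t$ large (since $R_t\to 0$). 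This yields $|e^{-t\eta(\theta)}\ee[e^{\theta N_t}]-\psi(\theta)|\leq C_K/t^\nu$ for all $\theta\in K$, which is the claimed speed.

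I expect the main technical point to be the uniform control of the exponential difference over complex $\theta\in K$; everything else is a direct consequence of the polynomial tail of $g$. That point, however, is handled cleanly by the integral representation of $e^a-e^b$ together with the boundedness of $G$ on $[0,\|g\|_{L^1}]$, so once the closed form from Lemma~\ref{theo4} is in hand the argument is essentially a single tail integration.
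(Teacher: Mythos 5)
Your proposal is correct and follows essentially the same route as the paper: reduce the difference $e^{-t\eta(\theta)}\ee[e^{\theta N_t}]-\psi(\theta)$ to the tail integral $\rho\int_t^\infty\bigl[e^{(e^\theta-1)\int_0^u g}-e^{(e^\theta-1)\int_0^\infty g}\bigr]du$, bound the integrand by a constant (uniform on $K$) times $\int_u^\infty g(s)\,ds$, and integrate the assumed polynomial tail of $g$ to get $\mathcal{O}(t^{-\nu})$. Your use of the identity $e^a-e^b=(a-b)\int_0^1 e^{b+s(a-b)}\,ds$ in place of the paper's appeal to the mean value theorem (which does not literally apply to complex-valued functions), and your explicit final step $|\psi(\theta)||e^{R_t}-1|\leq 2|\psi(\theta)||R_t|$, actually make the argument cleaner than the one in the paper.
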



\subsection{Precise Moderate Deviations}

By Theorem 3.9.~\citep{Feray}, we have the following theorem:

\begin{theorem}\label{theo5}
    (i)
For any $ y=o(t^{1/6}) $, as $ t\to \infty $,
\begin{equation}
\mathbb{P}\left(N_t\geq(\nu+\rho \Vert g\Vert_{L^{1}})t+\sqrt{t}\sqrt{\nu+\rho\Vert g\Vert_{L^{1}}(\Vert g\Vert_{L^{1}}+1)}y\right)=\Psi(y)(1+o(1)),
\end{equation}
where $ \Psi(y):=\int_{y}^{\infty}\frac{1}{\sqrt{2\pi}}e^{-x^2/2}dx $.

(ii)
For any $ y\gg 1 $ and $ y=o(t^{1/2}) $, as $ t\to \infty $,
\begin{equation}
\mathbb{P}\left(N_t\geq(\nu+\rho \Vert g\Vert_{L^{1}})t+\sqrt{t}\sqrt{\nu+\rho\Vert g\Vert_{L^{1}}(\Vert g\Vert_{L^{1}}+1)}y\right)=\frac{e^{-tI(x^*)}}{\theta^*\sqrt{2\pi t \eta''(\theta^*)}}(1+o(1)),
\end{equation}
where $ x^*=(\nu+\rho \Vert
g\Vert_{L^{1}})+\frac{1}{\sqrt{t}}\sqrt{\nu+\rho\Vert
g\Vert_{L^{1}}(\Vert g\Vert_{L^{1}}+1)}y $ and $ \theta^* $ is the
solution of $ \eta'(\theta^*)=x^* $ and the notation $a\gg b$ means
b is much less than a.

\end{theorem}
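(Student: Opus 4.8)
The plan is to obtain Theorem~\ref{theo5} as a direct consequence of Theorem 3.9 of~\cite{Feray}, so that the entire argument reduces to (a) confirming the hypotheses of that result and (b) computing the two constants $\eta'(0)$ and $\eta''(0)$ explicitly. The hypotheses are already in hand: Lemma~\ref{theo2} shows that the reference law $\phi$ (the law of $Y$) is infinitely divisible, Lemma~\ref{theo4} establishes mod-$\phi$ convergence of $N_{t}$ with parameters $t_{n}=t$ and limiting function $\psi(\theta)=e^{\rho\varphi(\theta)}$, and Lemma~\ref{theo10} supplies the speed $\mathcal{O}(t^{-\nu})$. Moreover $\phi$ is a lattice distribution supported on $\nn$, which is precisely the setting in which Theorem 3.9 of~\cite{Feray} is stated. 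Thus, once the mod-$\phi$ convergence is secured by the three lemmas, no further analytic work on the Laplace transforms is required for the moderate-deviation statement.

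First I would differentiate the cumulant generating function
\beq
\eta(\theta)=(e^{\theta}-1)\nu+\rho\Bigl(e^{(e^{\theta}-1)\Vert g\Vert_{L^{1}}}-1\Bigr)
\feq
to identify the Gaussian centering and scaling. Writing $L:=\Vert g\Vert_{L^{1}}$, one computes
\beq
\eta'(\theta)&=&\nu e^{\theta}+\rho L\, e^{\theta}e^{(e^{\theta}-1)L},\\
\eta''(\theta)&=&\nu e^{\theta}+\rho L\, e^{\theta}e^{(e^{\theta}-1)L}\bigl(1+L e^{\theta}\bigr),
\feq
so that evaluating at $\theta=0$ yields $\eta'(0)=\nu+\rho L$ and $\eta''(0)=\nu+\rho L(L+1)$. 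These are exactly the centering $(\nu+\rho\Vert g\Vert_{L^{1}})t=t\eta'(0)$ and the variance $\nu+\rho\Vert g\Vert_{L^{1}}(\Vert g\Vert_{L^{1}}+1)=\eta''(0)$ appearing under the square root in the theorem, so the event in the statement coincides with $\{N_{t}\geq t\eta'(0)+\sqrt{t\eta''(0)}\,y\}$, to which Theorem 3.9 applies.

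With these constants in place, part (i) follows by invoking the first half of Theorem 3.9 in the regime $y=o(t^{1/6})$, which returns the Gaussian tail $\Psi(y)(1+o(1))$; part (ii) follows from the second half in the regime $y\gg 1$, $y=o(t^{1/2})$, upon setting $x^{*}=\eta'(0)+\sqrt{\eta''(0)/t}\,y$ and letting $\theta^{*}$ solve $\eta'(\theta^{*})=x^{*}$. Existence and uniqueness of $\theta^{*}$ are guaranteed by the strict convexity $\eta''>0$ noted after Theorem~\ref{theo1}, since this forces $\eta'$ to be a strictly increasing bijection onto its range. The one point requiring care is the bookkeeping: one must check that the substitution $x=x^{*}$ turns the abstract right-hand side $\frac{e^{-t_{n}F(x)}}{h\sqrt{2\pi t_{n}\eta''(h)}}$ of~\cite{Feray} into the claimed $\frac{e^{-tI(x^{*})}}{\theta^{*}\sqrt{2\pi t\,\eta''(\theta^{*})}}$, using $I=F$ for the Legendre transform defined in~\eqref{Ix} and the identification $h=\theta^{*}$. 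Since the heavy analytic lifting is front-loaded into Lemmas~\ref{theo2}--\ref{theo10}, the expected obstacle here is not conceptual but purely computational, namely carrying out the derivative evaluations and matching the scaling constants without error.
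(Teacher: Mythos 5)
Your proposal is correct and follows essentially the same route as the paper: the authors likewise dispose of Theorem~\ref{theo5} in one line by citing Theorem 3.9 of~\cite{Feray}, relying on the mod-$\phi$ convergence established in Lemmas~\ref{theo2}--\ref{theo10} and the lattice nature of the reference law. Your explicit verification that $\eta'(0)=\nu+\rho\Vert g\Vert_{L^{1}}$ and $\eta''(0)=\nu+\rho\Vert g\Vert_{L^{1}}(\Vert g\Vert_{L^{1}}+1)$ is accurate and in fact supplies bookkeeping the paper omits.
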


\begin{remark}
    Note that Theorem \ref{theo5} (i) is an improvement of the classical central limit theorem because here we allow $ y=o(t^{1/6}) $ for $ t\to \infty $. Theorem \ref{theo5} (ii) is the moderate deviations.
\end{remark}

By using Corollary 3.13~\citep{Feray}, we can get a more explicit form of Theorem \ref{theo5}.

\begin{theorem}\label{theo6}
    (i)
    For any $ y=o(t^{1/4}) $, as $ t\to \infty $,
    \begin{equation}
    \begin{split}
\mathbb{P}\left(N_t\geq(\nu+\rho \Vert
g\Vert_{L^{1}})t+\sqrt{t}\sqrt{\nu+\rho\Vert g\Vert_{L^{1}}(\Vert
g\Vert_{L^{1}}+1)}y\right)\\
=\frac{(1+o(1))}{y\sqrt{2\pi}}e^{-\frac{y^2}{2}}\exp\left({\frac{\eta'''(0)}{6(\eta''(0))^{3/2}}\frac{y^3}{\sqrt{t}}}\right)
    \end{split}
    \end{equation}
    (ii)
    For any $ y=o(t^{1/2-1/m}) $, where $m\geq 3$, as $ t\to \infty $,
    \begin{equation}
    \begin{split}
\mathbb{P}\left(N_t\geq(\nu+\rho \Vert g\Vert_{L^{1}})t+\sqrt{t}\sqrt{\nu+\rho\Vert g\Vert_{L^{1}}(\Vert g\Vert_{L^{1}}+1)}y\right) \\
=\frac{(1+o(1))}{y\sqrt{2\pi}}\exp\left(-\sum_{i=2}^{m-1}\frac{I^{(i)}(\eta'(0))}{i!}\frac{(\eta''(0))^{i/2}y^i}{t^{(i-2)/2}}\right),
    \end{split}
    \end{equation}
    where $I(x)$ is defined in \eqref{Ix} and $ \eta(\theta) $ is defined in \eqref{theta}.
\end{theorem}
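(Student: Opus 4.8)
The plan is to derive Theorem \ref{theo6} as a direct application of Corollary 3.13 of~\citep{Feray} to the mod-$\phi$ convergence of $N_t$ already in hand. By Lemmas \ref{theo2}, \ref{theo4}, and \ref{theo10}, the family $\{N_t,\,t\geq 0\}$ converges mod-$\phi$ locally uniformly in $\theta$, with reference infinitely divisible law $\phi$ (whose log-Laplace transform is $\eta$) a lattice distribution supported on $\nn$, limiting function $\psi(\theta)=e^{\rho\varphi(\theta)}$, and speed $\mathcal{O}(1/t^{\nu})$. All hypotheses of Corollary 3.13 are therefore in force, and what remains is to translate its abstract formula into the explicit quantities in the statement.

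The computational core is the evaluation of the low-order derivatives of $\eta$ at the origin. Differentiating \eqref{theta} directly, I would obtain
\beq
\eta'(0) &=& \nu + \rho\Vert g\Vert_{L^{1}},\\
\eta''(0) &=& \nu + \rho\Vert g\Vert_{L^{1}}(\Vert g\Vert_{L^{1}}+1),
\feq
which identifies the centering $t\eta'(0)$ and the Gaussian scaling $\sqrt{t\eta''(0)}$ inside the probability, matching the drift and variance factors written in both parts of the theorem. For part~(i), valid for $y=o(t^{1/4})$, I would substitute these values together with $\eta'''(0)$ into the first displayed formula of Corollary 3.13, producing the stated expression with cubic correction $\frac{\eta'''(0)}{6(\eta''(0))^{3/2}}\frac{y^3}{\sqrt{t}}$.

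For part~(ii) I would invoke the general expansion of Corollary 3.13, valid for $y=o(t^{1/2-1/m})$, noting that the Legendre transform $F$ of~\citep{Feray} is exactly our $I(x)$ from \eqref{Ix}; this yields the sum $\sum_{i=2}^{m-1}\frac{I^{(i)}(\eta'(0))}{i!}\frac{(\eta''(0))^{i/2}y^i}{t^{(i-2)/2}}$ verbatim. As a consistency check linking the two parts, I would record the standard Legendre duality $I''(\eta'(0))=1/\eta''(0)$ and $I'''(\eta'(0))=-\eta'''(0)/(\eta''(0))^3$: the $i=2$ term of the sum in~(ii) then reproduces the Gaussian factor $e^{-y^2/2}$ and the $i=3$ term reproduces the cubic correction of~(i), so that part~(i) is precisely the case $m=4$ rewritten through the cumulants of $\eta$.

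The one point requiring care is bookkeeping rather than analysis: I must confirm that the lattice hypothesis of Corollary 3.13 holds for $\phi$ and that the convergence speed $\mathcal{O}(1/t^{\nu})$ furnished by Lemma \ref{theo10} suffices for the chosen expansion order $m$. Both are supplied by the earlier structure—$\phi$ is lattice by the Poisson form underlying Lemma \ref{theo2}, and the decay assumption $g(t)=\mathcal{O}(1/t^{\nu+2})$ is what delivers the speed in Lemma \ref{theo10}—so no new estimates are needed, and the theorem reduces entirely to the explicit differentiation of $\eta$ above.
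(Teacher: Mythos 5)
Your proposal is correct and follows essentially the same route as the paper: the paper's entire proof is the one-line observation that, the mod-$\phi$ convergence having been established in Lemmas \ref{theo2}--\ref{theo10} with a lattice reference law, Theorem \ref{theo6} is a direct instance of Corollary 3.13 of \citep{Feray}. Your additional computations of $\eta'(0)$ and $\eta''(0)$ and the Legendre-duality check $I''(\eta'(0))=1/\eta''(0)$, $I'''(\eta'(0))=-\eta'''(0)/(\eta''(0))^{3}$ are accurate and simply make explicit the bookkeeping the paper leaves implicit.
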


\section{Proofs}
In this section, we give a proof of the main theorems and lemmas.
The key idea to prove the main theorem is to apply the recently
developed mod-$\phi$ convergence method.

\subsection{Proofs of the results in Section 2.1}

\begin{proposition}
Assume that $N_{t}$ is Cox process with Poisson shot noise intensity defined in~\eqref{intencox}, then the moment generating function of $N_{t}$ is
\beqn\label{mgf1}
\ee\left[e^{\theta N_{t}}\right]=\exp{\Biggl[(e^{\theta}-1)t\nu+\rho\int_{0}^{t}\Bigl(e^{(e^{\theta}-1)\int_{0}^{u}g(s)ds}-1\Bigr)du\Biggr]}
\feqn
\end{proposition}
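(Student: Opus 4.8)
The plan is to exploit the defining structure of the Cox process as a doubly stochastic Poisson process and then to evaluate the resulting functional of the driving Poisson process $\bar N$ via the exponential (Campbell) formula. First I would condition on the whole intensity path, equivalently on the $\sigma$-algebra $\calf^{\bar N}_t$ generated by $\{\bar N_s, 0\le s\le t\}$, which determines $\lambda_\cdot$ through~\eqref{intencox}. By definition of the Cox process, conditionally on this $\sigma$-algebra $N_t$ is Poisson with (random) mean $\Lambda_t:=\int_0^t\lambda_s\,ds$, so the conditional moment generating function is the Poisson one,
$$\ee\!\left[e^{\theta N_t}\mid \calf^{\bar N}_t\right]=\exp\!\left[(e^\theta-1)\Lambda_t\right].$$
Taking expectations and using the tower property then reduces the problem to computing $\ee\big[\exp((e^\theta-1)\Lambda_t)\big]$, a pure functional of the Poisson shot noise.

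Next I would make the integrated intensity explicit. Writing $\Lambda_t=\nu t+\int_0^t\!\int_0^s g(s-r)\,d\bar N_r\,ds$ and interchanging the two integrations by Fubini (justified by local boundedness of $g$ and $\|g\|_{L^1}<\infty$), one obtains
$$\Lambda_t=\nu t+\int_0^t G(t-r)\,d\bar N_r,\qquad G(x):=\int_0^x g(u)\,du,$$
so that $\int_0^t G(t-r)\,d\bar N_r=\sum_{\tau_i\le t}G(t-\tau_i)$ is a sum over the jump times $\tau_i$ of $\bar N$. The deterministic factor $e^{(e^\theta-1)\nu t}$ pulls out, and it remains to evaluate $\ee\big[\exp((e^\theta-1)\sum_{\tau_i\le t}G(t-\tau_i))\big]$.

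The crucial step is to apply Campbell's theorem, i.e.\ the exponential formula for a homogeneous Poisson process $\bar N$ of rate $\rho$ on $[0,t]$: for deterministic $h$,
$$\ee\!\left[\exp\!\Big(\sum_{\tau_i\le t}h(\tau_i)\Big)\right]=\exp\!\left[\rho\int_0^t\!\big(e^{h(r)}-1\big)\,dr\right].$$
Taking $h(r)=(e^\theta-1)G(t-r)$ and then substituting $u=t-r$ turns the $r$-integral into $\rho\int_0^t\big(e^{(e^\theta-1)\int_0^u g(s)\,ds}-1\big)\,du$, which, combined with the $e^{(e^\theta-1)\nu t}$ factor, is exactly~\eqref{mgf1}. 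The main obstacle is not any single hard estimate but the careful justification of the two interchanges---the conditioning/Fubini step and the exponential formula---together with checking that the assumption $\|g\|_{L^1}<\infty$ guarantees finiteness of all the integrals for real $\theta$, after which analyticity in $\theta$ extends the identity to $\theta\in\mathbb{C}$ in the relevant strip.
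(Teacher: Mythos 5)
Your proposal is correct and follows essentially the same route as the paper: condition on the intensity path to reduce to the Poisson moment generating function of $\Lambda_t=\int_0^t\lambda_s\,ds$, interchange the integrals by Fubini to write the exponent as a stochastic integral against $\bar N$, and then apply the exponential (Campbell) formula for the rate-$\rho$ Poisson process followed by the change of variables $u=t-r$. The only difference is that you spell out the conditioning and the justification of the interchanges more explicitly than the paper does.
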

\begin{proof}
By the definition of a Cox process with Poisson shot noise intensity and Fubini's theorem, we have
\begin{equation}\label{EY}
\begin{split}
\ee\left[e^{\theta N_{t}}\right]&=\ee\left[e^{(e^{\theta}-1)\int_{0}^{t}\lambda_{s}ds}\right]\\
&=\ee\left[e^{(e^{\theta}-1)\left[\nu t+\int_{0}^{t}\int_{0}^{s}g(s-u)d\bar{N}_{u}ds\right]}\right]\\
&=\ee\left[e^{(e^{\theta}-1)\left[\nu t+\int_{0}^{t}[\int_{u}^{t}g(s-u)ds]d\bar{N}_{u}\right]}\right]\\
&=e^{(e^{\theta}-1)t\nu}\ee\left[e^{(e^{\theta}-1)\int_{0}^{t}[\int_{u}^{t}g(s-u)ds]d\bar{N}_{u}}\right]\\
&=e^{(e^{\theta}-1)t\nu}\exp\Biggl[\rho\int_{0}^{t}\left(e^{(e^{\theta}-1)\int_{u}^{t}g(s-u)ds}-1\right)du\Biggr]\\
&=e^{(e^{\theta}-1)t\nu}\exp\Biggl[\rho\int_{0}^{t}\left(e^{(e^{\theta}-1)\int_{0}^{t-u}g(s)ds}-1\right)du\Biggr]\\
&=\exp{\Biggl[(e^{\theta}-1)t\nu+\rho\int_{0}^{t}\Bigl(e^{(e^{\theta}-1)\int_{0}^{u}g(s)ds}-1\Bigr)du\Biggr]}.
\end{split}
\end{equation}
\end{proof}
\begin{remark}
From~\eqref{mgf1}, we have \beqn
\lim_{t\rightarrow\infty}\frac{1}{t}\log\ee\left[e^{\theta
N_{t}}\right]=(e^{\theta}-1)\nu+\rho\Bigl(e^{(e^{\theta}-1)\Vert
g\Vert_{L^{1}}}-1\Bigr). \feqn Then, by {G{\"a}rtner-Ellis theorem
(see for details~\citep{bordenave})}, we can say that
$(\frac{N_t}{t}\in\cdot)$ satisfies the large deviation principle
with the good rate function \beqn\label{ratef}
I(x)=\sup_{\theta\in\rr}\Biggl\{\theta
x-\Bigl[(e^{\theta}-1)\nu+\rho\Bigl(e^{(e^{\theta}-1)\Vert
g\Vert_{L^{1}}}-1\Bigr)\Bigr]\Biggr\} \feqn
\end{remark}
\begin{proof}[Proof of Lemma~\ref{theo2}]
It suffice to show that \beq
\exp\Bigl[(e^{\theta}-1)\nu+\rho\Bigl(e^{(e^{\theta}-1)\Vert
g\Vert_{L^{1}}}-1\Bigr)\Bigr]=\ee\left[e^{\theta Y}\right] \feq
where Y is an infinitely divisible random variable. To this end, Y
can be interpreted as $ Y=X+Z $. Here $X$ is a Poisson random
variable with parameter $ \nu $, $ Z=\sum_{i=1}^{R}Z_i $ is a
compound Poisson random variable, $R$ is a Poisson random variable
with parameter $ \rho $, $ Z_i $ are i.i.d. Poisson random variables
with parameter $ \Vert g\Vert_{L^{1}} $, also, $ X $ and $ Z $ are
independent thus
\begin{equation}
\begin{split}
\mathbb{E}\left[e^{\theta Y}\right]&=\mathbb{E}\left[e^{\theta X}\right]\mathbb{E}\left[e^{\theta Z}\right]\\
&=\exp\left[(e^{\theta}-1)\nu\right]\exp\left[\rho\Bigl(e^{(e^{\theta}-1)\Vert g\Vert_{L^{1}}}-1\Bigr)\right]\\
&=\exp\Bigl[(e^{\theta}-1)\nu+\rho\Bigl(e^{(e^{\theta}-1)\Vert g\Vert_{L^{1}}}-1\Bigr)\Bigr]
\end{split}
\end{equation}
Therefore, $Y$ has an infinitely divisible distribution.
\end{proof}
\begin{proof}[Proof of Lemma~\ref{theo4}]
We first consider
\beq
    e^{-t\eta(\theta)}\ee[e^{\theta N_{t}}]=\frac{\ee\left[e^{\theta N_{t}}\right]}{\exp\Bigl[(e^{\theta}-1)\nu t+\rho t\Bigl(e^{(e^{\theta}-1)\Vert g\Vert_{L^{1}}}-1\Bigr)\Bigr]}.
\feq
Using~\eqref{mgf1}, we have
\begin{equation}
\begin{split}
&\frac{\ee\left[e^{\theta N_{t}}\right]}{\exp\Bigl[(e^{\theta}-1)\nu t+\rho t\Bigl(e^{(e^{\theta}-1)\Vert g\Vert_{L^{1}}}-1\Bigr)\Bigr]}\\
&=\exp\Biggl[\rho\int_{0}^{t}\Bigl(e^{(e^{\theta}-1)\int_{0}^{u}g(s)ds}-1\Bigr)du-\rho t\Bigl(e^{(e^{\theta}-1)\Vert g\Vert_{L^{1}}}-1\Bigr)\Biggr]\\
&=\exp\Biggl[\rho\int_{0}^{t}\Bigl[e^{(e^{\theta}-1)\int_{0}^{u}g(s)ds}-e^{(e^{\theta}-1)\int_{0}^{\infty}g(s)ds}\Bigr]du\Biggr]\\
&\rightarrow\psi(\theta):=e^{\rho\varphi(\theta)}.
\end{split}
\end{equation}
To prove this, it suffices to show that
\beqn
\int_{0}^{t}\Bigl[e^{(e^{\theta}-1)\int_{0}^{u}g(s)ds}-e^{(e^{\theta}-1)\int_{0}^{\infty}g(s)ds}\Bigr]du\nonumber\\
\rightarrow
\int_{0}^{\infty}\Bigl[e^{(e^{\theta}-1)\int_{0}^{u}g(s)ds}-e^{(e^{\theta}-1)\int_{0}^{\infty}g(s)ds}\Bigr]du
\feqn as $t\rightarrow\infty,$ locally uniformly in
$\theta\in\mathbb{C}.$ It is equivalent to show that for any compact
set $K\subset\mathbb{C}$, $ \forall \epsilon>0, \exists N$ s.t. when
$ t>N $,
\begin{equation}\label{converge}
\left|\int_{t}^{\infty}\Bigl[e^{(e^{\theta}-1)\int_{0}^{u}g(s)ds}-e^{(e^{\theta}-1)\int_{0}^{\infty}g(s)ds}\Bigr]du\right|<\epsilon,
\end{equation}
for $\theta\in K,$ and we have
\begin{equation}\label{eqn10}
\begin{split}
&\left|\int_{t}^{\infty}\Bigl[e^{(e^{\theta}-1)\int_{0}^{u}g(s)ds}-e^{(e^{\theta}-1)\int_{0}^{\infty}g(s)ds}\Bigr]du\right|\\
&=\left|\int_{t}^{\infty}\Bigl[(e^{\theta}-1)e^{(e^{\theta}-1)\xi}\int_{u}^{\infty}g(s)ds\Bigr]du\right|
\end{split}
\end{equation}
by the mean value theorem, $\xi(u)\in\left(\int_{0}^{u}g(s)ds,\int_{0}^{\infty}g(s)ds
\right) $.
\begin{equation}\label{eqn11}
\begin{split}
&\left|\int_{t}^{\infty}\Bigl[(e^{\theta}-1)e^{(e^{\theta}-1)\int_{0}^{\xi}g(s)ds}\int_{u}^{\infty}g(s)ds\Bigr]du\right|\\
&<\hat{C}_K\left|\int_{t}^{\infty}\int_{u}^{\infty}g(s)dsdu\right|
\end{split}
\end{equation}
where $ \hat{C}_K=\sup_{\theta\in K}\Bigl\{(e^{\theta}-1)e^{(e^{\theta}-1)\int_{0}^{\infty}g(s)ds}\Bigr\} $. According to our assumption, $ g(t)=\mathcal O\left(\frac{1}{t^{\nu+2}}\right) $, so $ \left|\int_{t}^{\infty}\int_{u}^{\infty}g(s)dsdu\right|= \mathcal O\left(\frac{1}{t^{\nu}}\right)$ as $ t\to\infty $.

So we have  for $ \epsilon'=\epsilon/\hat{C}_K $ there exists $ N $ such that if $ t>N$,
\begin{equation}
\left|\int_{t}^{\infty}\int_{u}^{\infty}g(s)dsdu\right|<\epsilon/\hat{C}_K
\end{equation}
then for $ t>N $,
\begin{equation}
\left|\int_{t}^{\infty}\Bigl[(e^{\theta}-1)e^{(e^{\theta}-1)\int_{0}^{\xi}g(s)ds}\int_{u}^{\infty}g(s)ds\Bigr]du\right|<\epsilon
\end{equation}
so we have proved \eqref{converge}.

Thus, we conclude that
\beqn
\exp\Biggl[\rho\int_{0}^{t}\Bigl[e^{(e^{\theta}-1)\int_{0}^{u}g(s)ds}-e^{(e^{\theta}-1)\int_{0}^{\infty}g(s)ds}\Bigr]du\Biggr]
\rightarrow\psi(\theta):=e^{\rho\varphi(\theta)},
\feqn
as $t\rightarrow\infty,$ locally uniformly in $\theta\in\mathbb{C},$
where
\beq
\varphi(\theta)=\int_{0}^{\infty}\Bigl[e^{(e^{\theta}-1)\int_{0}^{u}g(s)ds}-e^{(e^{\theta}-1)\int_{0}^{\infty}g(s)ds}\Bigr]du.
\feq
Hence, $\varphi(\theta)$ is well-defined and is analytic in $\theta.$
\end{proof}

\begin{proof}[Proof of lemma \ref{theo10}]
    By definition \ref{def1}, it suffices to show that
    \beqn
    \exp\Biggl[\rho\int_{0}^{\infty}G(u)du\Biggr]-\exp\Biggl[\rho\int_{0}^{t}G(u)du\Biggr]< C_K\left(\frac{1}{t^{\nu}}\right)
    \feqn
    as $ t\to\infty$. Here $G(u)=e^{(e^{\theta}-1)\int_{0}^{u}g(s)ds}-e^{(e^{\theta}-1)\int_{0}^{\infty}g(s)ds} $. According to (\ref{eqn10}) and (\ref{eqn11}),
    \begin{equation}
    \begin{split}
    &\exp\Biggl[\rho\int_{0}^{\infty}G(u)du\Biggr]-\exp\Biggl[\rho\int_{0}^{t}G(u)du\Biggr]\\
    &=\rho^2\exp\Biggl[\rho\int_{0}^{\xi'}G(u)du\Biggr]\left(\int_{0}^{\infty}G(u)du-\int_{0}^{t}G(u)du\right)\\
    &<\bar{C}_K\left|\int_{t}^{\infty}\int_{u}^{\infty}g(s)dsdu\right|=C_K\left(\frac{1}{t^{\nu}}\right)
    \end{split}
    \end{equation}
    as $ t\to\infty. $ Here $ \xi'\in(t,\infty) $.
     Thus we have proved Lemma \ref{theo10}.

\end{proof}
\begin{proof}[Completion of the Proof of Theorem~\ref{theo1}]
From Lemma~\ref{theo2} and Lemma~\ref{theo4}, we have established
the mod-$\phi$ convergence. Hence, by Theorem 3.4.~\citep{Feray}, for
any $x>0,$ and $tx\in\nn,$ \beqn
\pp(N_{t}=tx)=e^{-tI(x)}\sqrt{\frac{I''(x)}{2\pi
t}}\Biggl(\psi(\theta^{*})+\frac{a_{1}}{t}+\frac{a_{2}}{t^{2}}+\cdots
+\frac{a_{\nu-1}}{t^{\nu-1}}+\mathcal{O}\left(\frac{1}{t^{\nu}}\right)\Biggr),
\feqn and for any $x>\nu+\rho\Vert g\Vert_{L_1},$ \beqn
\pp(N_{t}\geq tx)=e^{-tI(x)}\sqrt{\frac{I''(x)}{2\pi
t}}\frac{1}{1-e^{-\theta^{*}}}\Biggl(\psi(\theta^{*})+\frac{b_{1}}{t}+\frac{b_{2}}{t^2}+\cdots
+\frac{b_{\nu-1}}{t^{\nu-1}}+\mathcal{O}\left(\frac{1}{t^{\nu}}\right)\Biggr),
\feqn where $I(x)$ is defined in~\eqref{ratef}. This completes the
proof of Theorem~\ref{theo1}.
\end{proof}

\subsection{Proofs of the results in Section 2.2}

Since we have established the mod-$ \phi $ convergence and $ N_t $
is a lattice distribution, the proof of Theorem \ref{theo5} and
Theorem \ref{theo6} follows from Theorem 3.9 and Corollary 3.13 in
\citep{Feray}, respectively.

\noindent
{\bf Acknowledgements}\\
Youngsoo Seol is grateful to the support from the Dong-A University research grant.\\

\bibliographystyle{amsplain}

\end{document}